\newtheorem{lem}{Lemma}[section]
\newtheorem*{theorem*}{Theorem}
\newtheorem{cor}{Corollary}[section]
\newtheorem*{corollary*}{Corollary}
\theoremstyle{definition}
\newtheorem{rem}[cor]{Remark}
\numberwithin{equation}{section}
\newcommand{\CC}{{\mathbb C}}
\newcommand{\NN}{{\mathbb N}}
\newcommand{\RR}{{\mathbb R}}
\newcommand{\rest}[2]{{{#1}_{\kern-.5pt|{#2}}}}
\newcommand{\co}{{\textup{co}\,}}
\newcommand{\Rel}{{\textup{Re}\,}}
\newcommand{\sigmaeps}{{\sigma_\varepsilon}}
\def\C*{{\sl C*}-algebra}
\def\CA*{{\sl C*}-Algebra}
\def\Cs*{{\sl C*}-subalgebra}
\begin{document}

\title[Invertibility preserving mappings onto finite \textsl{C*-}algebras]{Invertibility preserving mappings onto\\ finite \textsl{C*-}algebras}

\author{Martin Mathieu}
\address{Mathematical Sciences Research Centre, Queen's University Belfast, Bel\-fast BT7 1NN, Northern Ireland}
\email{m.m@qub.ac.uk}
\author{Francois Schulz}
\address{Department of Mathematics and Applied Mathematics, Faculty of Science, University of Johannesburg,
P.O. Box 524, Auckland Park, 2006, South Africa}
\email{francoiss@uj.ac.za}

\subjclass[2020]{47B48, 47A10, 46L05, 46L30, 16W10, 17C65}
\keywords{C*-algebras, tracial states, Jordan homomorphisms, invertibility preserving mappings}


\begin{abstract}
We prove that every surjective unital linear mapping which preserves invertible elements from a Banach algebra onto a \C*
carrying a faithful tracial state is a Jordan homomorphism thus generalising Aupetit's 1998 result for finite von Neumann algebras.
\end{abstract}

\maketitle

\section{Introduction}\label{sect:intro}

\noindent
A linear mapping $T$ between two unital, complex Banach algebras is said to be \textit{spectrum-preserving\/} if, for every element $a$ in the
domain algebra, its spectrum $\sigma(a)$ coincides with $\sigma(Ta)$. Provided the codomain is semisimple and $T$ is surjective, $T$ must be bounded (a result belonging
to Aupetit \cite[Theorem 5.5.2]{Aup91}). Provided the domain is semisimple too, $T$ is injective;
this follows from Zem\'anek's characterisation of the radical (\cite[Theorem 5.3.1]{Aup91}) as
\[
\sigma(a+x)=\sigma(Ta+Tx)=\sigma(Tx)=\sigma(x)\quad\text{for each $a$ such that $Ta=0$ and every $x$}
\]
which implies that $a$ belongs to the radical, which is zero in the semisimple case. Moreover, $T1=1$, that is, $T$ is \textit{unital}.
As a result, a surjective spectrum-preserving mapping between semisimple Banach algebras is a topological isomorphism and one
naturally wonders if it is also an isomorphism of (some of) the algebraic structure.

A \textit{Jordan homomorphism\/} is a linear mapping $T$ with the property $T(a^2)=(Ta)^2$ for all $a$ in the domain (which is equivalent to
$T(ab+ba)=TaTb+TbTa$ for all $a$ and~$b$). A Jordan isomorphism turns out to be spectrum-preserving, and a lot of work has been invested
to explore to what extent the reverse implication holds. A pleasant survey on the history of this topic is contained in~\cite{Aup98};
see also~\cite{Brits2019}, \cite{Ma09} and~\cite{Ma2013} for related questions.

In~\cite{Aup00}, Aupetit proved that every surjective spectrum-preserving linear mapping between von Neumann algebras is a Jordan isomorphism.
It is not difficult to see that it suffices that one of the algebras is a unital \C* of real rank zero and the other a unital semisimple Banach algebra.
However, the problem remains open for general \C*s. It is also known that the assumption on $T$ can be relaxed to a surjective unital
\textit{invertibility-preserving\/} linear mapping (that is, $\sigma(Ta)\subseteq\sigma(a)$ for all~$a$); the conclusion is then that $T$
is a Jordan homomorphism.

In an earlier paper~\cite{Aup98}, Aupetit had already obtained the same result for finite von Neumann algebras. The main tool in that result
was the Fuglede--Kadison determinant $\Delta$;
see \cite[pp.~105]{Dix1957} for its definition and properties. Its relation to the finite trace $\tau$ is given by
$\Delta(a)=\exp(\tau(\log|a|))$, for every invertible element $a$. In our approach we bypass the determinant and work exclusively with a (faithful) tracial state instead
in order to obtain the following generalisation.
\begin{theorem*}\label{thm:main}
Let $B$ be a unital complex Banach algebra and let $A$ be a unital finite \C*. Let $T\colon B\to A$ be a surjective unital linear mapping
which preserves invertible elements. Then $T$ is a Jordan homomorphism.
\end{theorem*}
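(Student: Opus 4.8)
The plan is to run Aupetit's argument for the finite von Neumann case \cite{Aup98}, but to replace every use of the Fuglede--Kadison determinant $\Delta$ by the faithful tracial state $\tau$ of $A$, through the functional $c\mapsto\tau(\log|c|)=\log\Delta(c)$ on the group of invertible elements of $A$. All that is needed of $\Delta$ is that it is continuous and multiplicative on invertibles, that $\min|\sigma(c)|\le\Delta(c)\le r(c)$, and that $\log\Delta(c)=\Rel\,\tau(\log c)$ when $c$ is close to $1$; each of these follows at once from the properties of $\tau$.

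First the structural preliminaries. Since $A$ is semisimple, the inclusion $\sigma(Tx)\subseteq\sigma(x)$ together with surjectivity forces $T$ to be bounded (the usual closed-graph argument needs only $r(Tx)\le r(x)$), and then $T$ is open by the open mapping theorem; in particular the set $\{Tv:v\text{ invertible in }B\}$ is a neighbourhood of $1$ in $A$, a fact used below. The analytic heart of the matter is that for every $a\in B$ the function
\[
 u_a(\lambda):=\log\Delta\bigl(T(\exp\lambda a)\bigr)=\tau\bigl(\log|T(\exp\lambda a)|\bigr)\qquad(\lambda\in\CC)
\]
is \emph{harmonic on all of $\CC$}: indeed $\exp\lambda a$ is invertible for every $\lambda$, hence so is $T(\exp\lambda a)$, and $u_a$ is locally the real part of the holomorphic function $\lambda\mapsto\tau(\log T(\exp\lambda a))$ (take the logarithm near $\lambda=0$ and continue; harmonicity is local). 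Applying $\min|\sigma(c)|\le\Delta(c)\le r(c)$ to $c=T(\exp\lambda a)$, whose spectrum lies inside $\{e^{\lambda\mu}:\mu\in\sigma(a)\}$, gives $|u_a(\lambda)|\le r(a)\,|\lambda|$. A harmonic function on $\CC$ of at most linear growth is affine, so $u_a(0)=0$ yields $u_a(\lambda)=\Rel(c_a\lambda)$ for some $c_a\in\CC$; comparing this, near $\lambda=0$, with
\[
 \tau\bigl(\log T(\exp\lambda a)\bigr)=\lambda\,\tau(Ta)+\tfrac{\lambda^2}{2}\,\tau\bigl(T(a^2)-(Ta)^2\bigr)+O(\lambda^3)
\]
(the holomorphic function with real part $u_a$ near $0$ and value $0$ at $0$ must be $c_a\lambda$) we conclude $c_a=\tau(Ta)$ and, decisively,
\begin{equation}
 \tau\bigl(T(a^2)-(Ta)^2\bigr)=0\qquad\text{for all }a\in B. \tag{$\ast$}
\end{equation}
This is the point at which finiteness is used, the faithful trace playing the role of the determinant of \cite{Aup98}.

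It remains to promote $(\ast)$ to the operator identity $T(a^2)=(Ta)^2$. The leverage is that the hypotheses on $T$ are stable under a natural deformation: for each invertible $v\in B$ the map $S_v\colon x\mapsto (Tv)^{-1}T(vx)$ is again unital, bounded, surjective and invertibility-preserving, so $(\ast)$ applies to $S_v$. Expanding $\tau(S_v(a^2)-(S_va)^2)=0$ and polarising gives, for all $a,b\in B$ and all invertible $v\in B$,
\[
 \tau\bigl((Tv)^{-1}T(v(ab+ba))\bigr)=2\,\tau\bigl((Tv)^{-1}T(va)\,(Tv)^{-1}T(vb)\bigr),
\]
a family of $\tau$-weighted Jordan identities whose weights $(Tv)^{-1}$ run over a neighbourhood of $1$ in $A$, hence over a total subset of $A$. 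One then plays these identities off against one another — differentiating in $v$ at $v=1$ in arbitrary directions and using the Jordan identity $aba=\tfrac12\{a,\{a,b\}\}-\tfrac12\{a^2,b\}$ to keep the resulting relations closed — and finally invokes faithfulness of $\tau$ (if $\tau(cd)=0$ for all $c$ in a total set then $d=0$) to cancel the trace and arrive at $T(a^2)=(Ta)^2$; by polarisation $T$ is a Jordan homomorphism.

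I expect the genuine difficulty to be this last step: extracting the pointwise identity from the scalar identity $(\ast)$ and its deformed companions. In the von Neumann setting one can fall back on the rich supply of projections and on comparison theory; here, with only a faithful tracial state available and no real rank zero, the whole burden lies in organising the family $\{S_v\}$ of identities so that the weights genuinely separate the relevant elements of $A$. A subsidiary point needing care is the automatic continuity of $T$ itself, since the closed-graph argument must be run from the one-sided inclusion $\sigma(Tx)\subseteq\sigma(x)$ alone.
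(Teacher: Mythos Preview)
Your one-variable analysis up to the scalar identity $(\ast)$ is sound (with one quibble: the two-sided growth estimate $|u_a(\lambda)|\le r(a)\,|\lambda|$ does not follow from $\sigma(T e^{\lambda a})\subseteq e^{\lambda\sigma(a)}$, since $\sigma(c)$ does not control $\sigma(|c|)$ for non-normal~$c$; however the one-sided bound $u_a(\lambda)\le\log\|T\|+\|a\|\,|\lambda|$ is correct and already forces $u_a$ to be affine). The genuine gap is exactly where you locate it: passing from $(\ast)$ to the operator identity. Polarising $(\ast)$ yields only $\tau(T(ab+ba))=2\,\tau(TaTb)$, and your deformation scheme via $S_v(x)=(Tv)^{-1}T(vx)$ is left as a programme rather than an argument; it is not clear that differentiating these relations in $v$ closes up to give $\tau\bigl(((Ta)^2-T(a^2))\,y\bigr)=0$ for all $y\in A$, which is what faithfulness requires.

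The paper sidesteps this entirely by working from the start with the \emph{two-variable} function $g(\lambda,\mu)=T(e^{\lambda a}e^{\mu b})\,e^{-\lambda Ta}e^{-\mu Tb}$ and showing that $\tau(\log|g(\lambda,\mu)|)$ vanishes identically near the origin. The vanishing of the $\lambda\mu$ coefficient in the log-expansion gives the \emph{unsymmetrised} identity $\tau(T(ab))=\tau(TaTb)$ (strictly stronger than polarised $(\ast)$: it makes $\tau\circ T$ tracial on~$B$), and the vanishing of the $\lambda^2\mu$ coefficient gives a second relation; combining the two with the trace property yields $\tau\bigl(((Ta)^2-T(a^2))\,Tb\bigr)=0$ for all $a,b\in B$, after which surjectivity and faithfulness finish immediately. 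So the two-variable set-up is not cosmetic: the mixed partial $\partial_\lambda\partial_\mu$ is precisely what supplies the off-diagonal information that a single exponential cannot see, and this is what your proposal is missing.
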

We largely follow Aupetit's arguments but, to emphasise the differences, we split up the proof into a series of lemmas in the next section.

\section{Preliminaries}\label{sect:prelims}

\noindent
Let $A$ be a unital \C*. We say that $A$ is \textit{finite\/} if it comes equipped with a \textit{faithful tracial state\/},
that is, a linear functional $\tau$ such that $\tau(1)=1=\|1\|$, $\tau(ab)=\tau(ba)$ for all $a,b\in A$ and
$\tau(a^*a)=0$ implies $a=0$. Such a functional is necessarily positive and bounded.

We denote the set of all \textit{states\/} of $A$ (positive linear functionals of norm~$1$) by~$S$ and by $Sp$ the subset
of all \textit{spectral states\/} $f$ of~$A$, that is, $f\in S$ and $|f(x)|\leq\rho(x)$ for every $x\in A$, where
$\rho(x)$ denotes the spectral radius of~$x$. It is known (\cite[Theorem~4 in~\S13]{BonDun}) that every $f\in Sp$
has the trace property, that is, $f(ab)=f(ba)$ for all $a,b\in A$, and that $f(a)\in\co\sigma(a)$, the convex hull of the spectrum
$\sigma(a)$ of $a$, for each $a\in A$; see, \cite[Lemma~2 in~\S13]{BonDun} or \cite[Lemma 4.1.15]{Aup91}.

Conversely, every tracial state $\tau$ belongs to $Sp$ as follows from the subsequent argument.
For $a\in A$, denote by $V(a)=\{f(a)\mid f\in S\}$ its (\textit{algebra\/}) \textit{numerical range} \cite{BonDun}.
As is shown in \cite[Lemma]{Berb69}, and attributed to \cite[\S2]{Hild},
$\co\sigma(a)=\bigcap_{b\in G(A)}V(bab^{-1})$, where $G(A)$ stands for the group of invertible elements in~$A$.
Clearly, $\tau(a)$ belongs to the right hand side of the above identity and hence, $\tau\in Sp$.
(Compare also \cite{Mat2003}.)

\begin{lem}\label{lem:aupetit-1.11}
Let $A$ be a unital \C* with faithful tracial state~$\tau$. Suppose that $g\colon\CC\to A$ is an entire function with values in $G(A)$.
Then the mapping $g_\tau\colon\CC\to\RR$, $g_\tau(\lambda)=\tau(\log(|g(\lambda)|)$ is harmonic.
\end{lem}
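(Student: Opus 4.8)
The plan is to reduce the claim to the classical fact that $\lambda \mapsto \tau(\log(h(\lambda)))$ is harmonic whenever $h$ is an entire function with values in the positive invertible elements and $\tau$ is a tracial state. First I would write $|g(\lambda)| = (g(\lambda)^* g(\lambda))^{1/2}$, so that $g_\tau(\lambda) = \frac12\,\tau(\log(g(\lambda)^* g(\lambda)))$. The trouble is that $\lambda \mapsto g(\lambda)^*$ is \emph{conjugate}-analytic, not analytic, so $h(\lambda) := g(\lambda)^* g(\lambda)$ is not entire; however, it is real-analytic, and more to the point one can exploit the trace. The key manoeuvre is to pass to a genuinely holomorphic object: fix $\lambda_0 \in \CC$ and, working locally, write $g(\lambda) = g(\lambda_0) u(\lambda) p(\lambda)$ or, more simply, observe that since $\tau$ is tracial,
\[
\tau\bigl(\log|g(\lambda)|\bigr) = \tfrac12\,\tau\bigl(\log(g(\lambda)g(\lambda)^*)\bigr)
\]
as well, because $g(\lambda)^*g(\lambda)$ and $g(\lambda)g(\lambda)^*$ are similar via $g(\lambda)$ (they have the same nonzero spectrum, and in a finite \C* the trace of $\log$ depends only on the Brown measure / spectral data compatible with~$\tau$). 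The cleanest route is therefore: show that $\tau(\log|g(\lambda)|) = \Rel\,\tau(\log g(\lambda))$, where a holomorphic branch of $\log g(\lambda)$ is chosen locally.

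Concretely, I would proceed as follows. Fix $\lambda_0$; on a small disc $D$ around $\lambda_0$ the function $g$ stays within a fixed distance of the invertible element $g(\lambda_0)$, so $g(\lambda_0)^{-1}g(\lambda)$ has spectrum in a half-plane avoiding $(-\infty,0]$, and we may define the holomorphic function $\ell(\lambda) = \log\bigl(g(\lambda_0)^{-1}g(\lambda)\bigr) + \log g(\lambda_0)$ (the second summand a fixed logarithm of $g(\lambda_0)$), so that $\exp \ell(\lambda)$ need not equal $g(\lambda)$ but $\ell$ is entire-on-$D$ and $\tau(\ell(\lambda))$ is holomorphic on $D$. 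Then $\lambda \mapsto \Rel\,\tau(\ell(\lambda))$ is harmonic on $D$. It remains to identify this with $g_\tau$. For that, use that for an invertible $a$ close to a positive invertible $b$ one has $\tau(\log|a|) = \Rel\,\tau(\log a)$ whenever $\log a$ is defined by a branch making $\log(b^{-1}a)$ small: indeed $\Rel\,\tau(\log a) = \tfrac12\bigl(\tau(\log a) + \overline{\tau(\log a)}\bigr) = \tfrac12\bigl(\tau(\log a) + \tau((\log a)^*)\bigr) = \tfrac12\,\tau\bigl(\log a + \log(a^*)\bigr)$, and since $\tau$ is tracial this equals $\tfrac12\,\tau\bigl(\log(a^*a)\bigr) = \tau(\log|a|)$, the middle step being the additivity $\tau(\log x + \log y) = \tau(\log(xy))$ valid under a trace when $x,y$ are close to the identity (Baker--Campbell--Hausdorff: the correction terms are commutators, which~$\tau$ annihilates). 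Since harmonicity is a local property, patching over all $\lambda_0$ gives that $g_\tau$ is harmonic on all of $\CC$.

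The main obstacle is making the identity $\tau(\log(xy)) = \tau(\log x) + \tau(\log y)$ rigorous for the relevant (non-commuting, merely close-to-$1$) elements, i.e.\ controlling the BCH series so that the higher commutator terms genuinely lie in the closed span of commutators on which $\tau$ vanishes; equivalently, justifying $\tau(\log|a|) = \Rel\,\tau(\log a)$ by an analytic-continuation / connectedness argument in $G(A)$ starting from the positive case where it is obvious. Once that identity is in hand, harmonicity is immediate from the holomorphy of $\lambda \mapsto \tau(\ell(\lambda))$ and the fact that the real part of a holomorphic function is harmonic. I would also remark that $\tau$ being bounded is what legitimises differentiating under $\tau$ and interchanging $\tau$ with the holomorphic functional calculus throughout.
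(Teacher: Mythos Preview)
The paper does not give an independent proof of this lemma; it simply observes that Aupetit's argument for Th\'eor\`eme~1.11 in~\cite{Aup98} is already phrased entirely in terms of the trace and therefore carries over verbatim to a \C* with a faithful tracial state. So there is no detailed argument in the paper itself to compare against, and your proposal must be judged on its own merits.

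Your overall strategy---exhibit $g_\tau$ locally as the real part of a holomorphic function via an identity of the form $\tau(\log|a|)=\Rel\tau(\log a)$---is sound, and once made precise it is equivalent to the standard Fuglede--Kadison approach. But the justification you give for that identity has a real gap. You invoke Baker--Campbell--Hausdorff to obtain $\tau(\log a+\log a^*)=\tau(\log(a^*a))$, arguing that the commutator terms are annihilated by~$\tau$; however the BCH series converges only when $\|\log a\|$ is small, whereas your $\ell(\lambda)=\log\bigl(g(\lambda_0)^{-1}g(\lambda)\bigr)+\log g(\lambda_0)$ contains the fixed summand $\log g(\lambda_0)$, which is in general not small---and indeed $g(\lambda_0)$ need not have any logarithm at all (take $A=C(\TT)$ with Haar trace and $g$ constant equal to the identity function~$z$). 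The same obstruction blocks the step $\tau\bigl(\log g(\lambda_0)\bigr)+\tau\bigl(\log(g(\lambda_0)^{-1}g(\lambda))\bigr)=\tau\bigl(\log g(\lambda)\bigr)$ implicit in your construction. Your ``analytic continuation'' alternative is gestured at but not carried out.

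The clean route, and very likely the one Aupetit takes, sidesteps BCH entirely by differentiating under the trace. The elementary fact one needs (proved by power series or an integral representation for $\log$, using only the trace property to cycle factors) is that for a $C^1$ path $t\mapsto h(t)$ of positive invertibles one has $\frac{d}{dt}\,\tau(\log h(t))=\tau\bigl(h(t)^{-1}h'(t)\bigr)$. Applying this with $h=g^*g$ and the Wirtinger operator $\partial/\partial\lambda$ (recalling $\partial g^*/\partial\lambda=0$) gives
\[
\frac{\partial}{\partial\lambda}\,2\,g_\tau(\lambda)
=\tau\bigl((g^*g)^{-1}g^*g'\bigr)=\tau\bigl(g^{-1}g'\bigr),
\]
which is holomorphic in~$\lambda$; hence $\partial^2 g_\tau/\partial\bar\lambda\,\partial\lambda=0$ and $g_\tau$ is harmonic. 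Incidentally, this same derivative computation is exactly what is needed to establish your desired identity $\tau(\log|e^X|)=\Rel\tau(X)$ for \emph{all}~$X$ (differentiate along $t\mapsto tX$), after which your argument would go through; but at that point the direct Wirtinger computation is shorter.
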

\begin{proof}
The argument in the proof of Th\'eor\`eme~1.11 in~\cite{Aup98}, which is already entirely formulated in terms of the trace,
takes over verbatim.
\end{proof}
In the following, $T$ will denote a surjective unital linear mapping defined on a (complex, unital) Banach algebra $B$ with values
in a finite unital \C*~$A$. We will assume that $T$ \textit{preserves invertible elements} so that $T\mkern.5mu G(B)\subseteq G(A)$.
It follows from \cite[Theorem 5.5.2]{Aup91} that $T$ is bounded.

Fix $a,b\in B$ and define
\[
g\colon\CC\times\CC\longrightarrow G(A),\quad g(\lambda,\mu)=T(e^{\lambda a}e^{\mu b})e^{-\lambda Ta}e^{-\mu Tb}.
\]
Then $g$ is a separately entire function. Its series expansion reads as follows
\begin{equation}\label{eq:series}
\begin{split}
g(\lambda,\mu) = 1 &+\frac{\lambda^2}{2}\bigl(T(a^2)-(Ta)^2\bigr) + \frac{\mu^2}{2}\bigl(T(b^2)-(Tb)^2\bigr)\\
                   &+\lambda\mu\bigl(T(ab)-TbTa\bigr)+\frac{\lambda^3}{6}\bigl(T(a^3)+2(Ta)^3-3T(a^2)Ta\bigr)\\
                   &+\frac{\lambda^2\mu}{2}\bigl(T(a^2b)+(Ta)^2Tb+Tb(Ta)^2-T(a^2)Tb-2T(ab)Ta\bigr)\\
                   &+\frac{\lambda\mu^2}{2}\bigl(T(ab^2)+2TbTaTb-2T(ab)Tb-T(b^2)Ta\bigr)\\
                   &+\frac{\mu^3}{6}\bigl(T(b^3)+2(Tb)^3-3T(b^2)Tb\bigr) +\textup{remainder}
\end{split}
\end{equation}
where the remainder only contains terms of degree~$4$ or higher in $\lambda$ and~$\mu$; we will put it to good use in
the proof of the main theorem.

By Lemma~\ref{lem:aupetit-1.11}, the function
\[
g_\tau\colon\CC\times\CC\longrightarrow\RR,\quad g_\tau(\lambda,\mu)=\tau(\log(|g(\lambda,\mu)|))
\]
is separately harmonic in $\lambda$ and~$\mu$ and thus there exists a separately entire function $h(\lambda,\mu)$ such that
$\Rel h(\lambda,\mu)=g_\tau(\lambda,\mu)$ for all $\lambda,\mu\in\CC$.

\smallskip
The next step will be to establish the following three lemmas; for their proofs, see Section~\ref{sect:proofs}.
\begin{lem}\label{lem:bounded}
For all $\lambda,\mu\in\CC$, we have $\,e^{\mkern.8mu g_\tau(\lambda,\mu)}\leq\|g(\lambda,\mu)\|$.
\end{lem}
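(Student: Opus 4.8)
The plan is to fix $\lambda,\mu\in\CC$, abbreviate $x=g(\lambda,\mu)\in G(A)$ and $c=|x|=(x^*x)^{1/2}$, and reduce everything to a single operator inequality for the positive invertible element~$c$. First I would record two elementary facts: by the \C*-identity $\|c\|=\|c^2\|^{1/2}=\|x^*x\|^{1/2}=\|x\|$, so $\log\|c\|=\log\|g(\lambda,\mu)\|$; and since $x$ is invertible, $0\notin\sigma(c)$, so $\log c$ is a well-defined self-adjoint element obtained from the continuous functional calculus applied to $\log$ on $\sigma(c)\subseteq(0,\|c\|]$. In particular $g_\tau(\lambda,\mu)=\tau(\log c)$ makes sense.

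Next I would invoke the spectral mapping theorem for the continuous functional calculus to get $\sigma(\log c)=\log\bigl(\sigma(c)\bigr)\subseteq(-\infty,\log\|c\|]$. Hence the self-adjoint element $\log\|c\|\,1-\log c$ has spectrum contained in $[0,\infty)$ and is therefore positive. Applying $\tau$, which is a positive functional (as recalled in the Preliminaries) with $\tau(1)=1$, yields $\tau\bigl(\log\|c\|\,1-\log c\bigr)\geq 0$, that is,
\[
g_\tau(\lambda,\mu)=\tau(\log c)\leq\log\|c\|=\log\|g(\lambda,\mu)\|.
\]
Exponentiating gives $e^{\mkern.8mu g_\tau(\lambda,\mu)}\leq\|g(\lambda,\mu)\|$, as claimed; since $\lambda,\mu$ were arbitrary, we are done. (This is, in effect, the classical estimate $\Delta(x)\leq\|x\|$ for the Fuglede--Kadison determinant, but written purely in terms of~$\tau$.)

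I do not anticipate a genuine obstacle here: the argument is short and uses only the functional calculus, the \C*-identity, and positivity of~$\tau$. The two points that deserve a word of care are the well-definedness of $\log c$, which is exactly why the hypothesis $g(\CC\times\CC)\subseteq G(A)$ is needed, and the passage from the operator inequality to the scalar one, where one must be careful to use that $\tau$ is a \emph{positive} functional normalised by $\tau(1)=1$ rather than merely a bounded trace. Note the inequality is typically strict (it becomes an equality precisely when $c$ is scalar), but only the stated bound is needed in the sequel.
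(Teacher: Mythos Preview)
Your proof is correct and follows essentially the same approach as the paper: both arguments amount to showing $\tau(\log c)\le\log\|c\|$ for the positive invertible $c=|g(\lambda,\mu)|$ via the spectral mapping theorem and the behaviour of $\tau$ on self-adjoint elements, then exponentiating. The paper phrases this through the spectral-state inclusion $\tau(a)\in\co\sigma(a)$ and passes through $\rho(|g|)$ before invoking the \C*-identity, whereas you go directly via positivity of $\log\|c\|\cdot 1-\log c$ and $\tau$; the difference is cosmetic.
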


\begin{lem}\label{lem:real-part}
With the above notation and caveats, let $g^*(\lambda,\mu)$ stand for $(g(\lambda,\mu))^*$.
Then there exists $r>0$ such that, for all $\lambda,\mu\in\CC$ with $|\lambda|,|\mu|<r$, we have
\begin{equation}\label{eq:trace-log-expansion}
2\,\Rel h(\lambda,\mu)=\tau\bigl(\log(g^*(\lambda,\mu)g(\lambda,\mu)\bigl)=-\sum_{k=1}^\infty\frac 1k\tau\bigl((1-g^*(\lambda,\mu)g(\lambda,\mu))^k\bigr).
\end{equation}
\end{lem}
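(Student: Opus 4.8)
The plan is to work at a fixed but arbitrary pair $(\lambda,\mu)$ near the origin and to use the standard power-series expansion of the principal logarithm. Since $g$ is separately entire with $g(0,0)=1$, and since multiplication and involution are continuous, the map $(\lambda,\mu)\mapsto g^*(\lambda,\mu)g(\lambda,\mu)$ is continuous and takes the value $1$ at the origin; hence there is an $r>0$ so that $\|1-g^*(\lambda,\mu)g(\lambda,\mu)\|<1$ whenever $|\lambda|,|\mu|<r$. For such $(\lambda,\mu)$ the element $g^*g$ is a positive invertible element whose spectrum lies in the open disc of radius $1$ about $1$ (in fact in an interval $(0,2)$ on the positive real axis), so $\log(g^*g)$ is given by the norm-convergent series $-\sum_{k\ge1}\frac1k(1-g^*g)^k$; applying the bounded linear functional $\tau$ term by term yields the second equality in~\eqref{eq:trace-log-expansion}. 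Note that $\log$ here is the continuous functional calculus logarithm of the self-adjoint element $g^*g$, which agrees with the holomorphic functional calculus used implicitly in Lemma~\ref{lem:aupetit-1.11}.

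For the first equality, recall that $g_\tau(\lambda,\mu)=\tau(\log|g(\lambda,\mu)|)$ where $|g|=(g^*g)^{1/2}$, so that $\log|g|=\tfrac12\log(g^*g)$ by functional calculus, and therefore $g_\tau(\lambda,\mu)=\tfrac12\tau(\log(g^*g))$. Since $h$ was chosen with $\Rel h=g_\tau$, we get $2\,\Rel h(\lambda,\mu)=2g_\tau(\lambda,\mu)=\tau(\log(g^*g))$, which is exactly what is claimed. The only subtlety is that $h$ is a priori defined on all of $\CC\times\CC$ (as a harmonic conjugate of a globally defined separately harmonic function), whereas the series representation is only valid on the polydisc of radius $r$; but this causes no difficulty, since~\eqref{eq:trace-log-expansion} is asserted only for $|\lambda|,|\mu|<r$.

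I expect no genuine obstacle here: the lemma is essentially a bookkeeping statement combining the identity $\log|x|=\tfrac12\log(x^*x)$ with the elementary estimate that guarantees convergence of the logarithmic series on a neighbourhood of the identity. The one point that deserves care is the interchange of $\tau$ with the infinite sum, which is justified by norm-convergence of the series $\sum_{k\ge1}\frac1k(1-g^*g)^k$ together with boundedness of $\tau$; a secondary point is the consistency of the various functional calculi (holomorphic versus continuous) for the positive invertible element $g^*g$, which holds because they agree on a neighbourhood of its spectrum. Both are routine, so the proof will be short.
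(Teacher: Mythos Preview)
Your proposal is correct and follows essentially the same route as the paper: continuity of $(\lambda,\mu)\mapsto g^*(\lambda,\mu)g(\lambda,\mu)$ at the origin gives the radius $r$, the power-series expansion of $\log$ yields the second equality, and the identity $\log|g|=\tfrac12\log(g^*g)$ together with $\Rel h=g_\tau$ gives the first. The paper's own proof is almost verbatim this argument, only omitting your parenthetical remarks on the interchange of $\tau$ with the sum and on the consistency of the functional calculi.
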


\begin{lem}\label{lem:identical-zero}
For all $\lambda,\mu$ in a neighbourhood of zero, $\tau\bigl(\log(g^*(\lambda,\mu)g(\lambda,\mu)\bigl)=0$.
\end{lem}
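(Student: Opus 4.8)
The plan is to combine the three preceding lemmas with the fact that $T$ is invertibility-preserving to force the harmonic function $g_\tau$ to be identically zero near the origin. First I would observe that because $T$ preserves invertible elements and $e^{\lambda a}e^{\mu b}$ is invertible for all $\lambda,\mu$, the element $g(\lambda,\mu)=T(e^{\lambda a}e^{\mu b})e^{-\lambda Ta}e^{-\mu Tb}$ lies in $G(A)$; moreover, since $T$ is unital and bounded, $\|g(\lambda,\mu)\|$ grows at most like $\exp(C(|\lambda|+|\mu|))$ for a suitable constant $C$ depending on $\|T\|$, $\|a\|$, $\|b\|$. By Lemma~\ref{lem:bounded} this gives $g_\tau(\lambda,\mu)=\Rel h(\lambda,\mu)\le C(|\lambda|+|\mu|)+\log(\text{const})$, so the separately harmonic function $g_\tau$ has at most linear growth.

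Next I would pin down the value and first-order behaviour of $g_\tau$ at the origin. Since $g(0,0)=1$ we have $g_\tau(0,0)=\tau(\log|1|)=0$. To control the gradient, I would use the series expansion \eqref{eq:series}: writing $g(\lambda,\mu)=1+\lambda\mu(\cdots)+(\text{quadratic and higher})$ with \emph{no} linear terms in $\lambda$ or~$\mu$, one sees that $g^*(\lambda,\mu)g(\lambda,\mu)=1+O(|\lambda|^2+|\mu|^2)$, so by Lemma~\ref{lem:real-part} the expansion $2\Rel h(\lambda,\mu)=-\sum_k\frac1k\tau((1-g^*g)^k)$ starts at second order. Hence $\Rel h$, and therefore $g_\tau$, vanishes to second order at $(0,0)$: both $g_\tau(0,0)=0$ and all first partials vanish there. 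Combined with the linear-growth bound from the previous paragraph, a Liouville-type argument for (separately) harmonic functions — harmonic in each variable, sublinear growth, hence affine in each variable, hence constant, and the constant is $0$ — shows $g_\tau\equiv 0$ on all of $\CC\times\CC$, in particular on a neighbourhood of zero. Applying Lemma~\ref{lem:real-part} once more, $\tau(\log(g^*(\lambda,\mu)g(\lambda,\mu)))=2\Rel h(\lambda,\mu)=0$ for $|\lambda|,|\mu|<r$.

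The step I expect to be the main obstacle is making the Liouville argument for \emph{separately} harmonic functions of sublinear growth airtight: one cannot directly invoke the classical one-variable Liouville theorem because fixing one variable only gives that $\lambda\mapsto g_\tau(\lambda,\mu)$ is harmonic and sublinear, hence of the form $\Rel(c(\mu)\lambda)+d(\mu)$, and one then has to track the regularity of $c(\mu)$, $d(\mu)$ and re-run the argument in~$\mu$. The cleanest route is probably to fix $\mu$, conclude $g_\tau(\cdot,\mu)$ is affine-linear in the real-linear sense with coefficients governed by the first partials at $0$ (which vanish after a translation argument centred at a generic point), and then let $\mu$ vary; alternatively, one can quote the corresponding step in Aupetit~\cite{Aup98}, where exactly this type of Phragmén--Lindelöf/Liouville reasoning for $g_\tau$ is carried out, since our $g_\tau$ satisfies the same hypotheses. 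Either way, once $g_\tau\equiv0$ is established, the conclusion is immediate from Lemma~\ref{lem:real-part}.
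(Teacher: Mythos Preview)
Your plan is correct and matches the paper's proof in all essentials: bound $\|g(\lambda,\mu)\|$ exponentially, feed this through Lemma~\ref{lem:bounded} to get at-most-linear growth of $\Rel h$, invoke a Liouville-type result to force $\Rel h$ to be real-affine, and then kill the linear part using the absence of first-order terms in~\eqref{eq:series} together with Lemma~\ref{lem:real-part}.

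The one place where the paper differs from your write-up is precisely the step you flag as the main obstacle. Rather than running Liouville on the separately \emph{harmonic} function $g_\tau=\Rel h$ and then having to track regularity of the $\mu$-dependent coefficients, the paper passes to the separately \emph{entire} function $e^{h(\lambda,\mu)}$, observes that $|e^{h}|=e^{g_\tau}\le\|g(\lambda,\mu)\|\le\|T\|\,e^{|\lambda|(\|a\|+\|Ta\|)+|\mu|(\|b\|+\|Tb\|)}$, and quotes \cite[Lemma~3.2]{Aup96} to conclude directly that $e^{h(\lambda,\mu)}=e^{\alpha\lambda+\beta\mu+\gamma}$ for constants $\alpha,\beta,\gamma\in\CC$. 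Taking real parts gives $2\Rel h(\lambda,\mu)=\alpha\lambda+\beta\mu+\bar\alpha\bar\lambda+\bar\beta\bar\mu$ (after normalising $\Rel\gamma=0$ via $g_\tau(0,0)=0$), and comparison with~\eqref{eq:trace-log-expansion} and~\eqref{eq:series} forces $\alpha=\beta=0$. Working on the holomorphic side buys you a clean one-line citation in place of the separately-harmonic bookkeeping you were anticipating.
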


\section{Proofs of the Lemmas and the Main Theorem}\label{sect:proofs}

\noindent
The argument of the first lemma differs from~\cite{Aup98} in that we cannot make use of the determinant
in order to locate the appropriate values in the convex hull of the spectrum.
\begin{proof}[Proof of Lemma~\ref{lem:bounded}]
As we observed above, $\tau(\log(|g(\lambda,\mu)|))\in\co\sigma(\log(|g(\lambda,\mu)|))$ for all $\lambda,\mu\in\CC$.
Since the spectrum of $\log(|g(\lambda,\mu)|)$ is contained in $\RR$, it follows that $\co\sigma(\log(|g(\lambda,\mu)|))=[s,t]$
for some $s,t\in\sigma(\log(|g(\lambda,\mu)|))$ with $s\leq t$.
Since the exponential function is strictly increasing, the Spectral Mapping Theorem implies that
\begin{equation*}
e^{\mkern.8mu g_\tau(\lambda,\mu)}\in[e^s,e^t]=\co\sigma(e^{\log(|g(\lambda,\mu)|)})
                                              =\co\sigma(|g(\lambda,\mu)|).
\end{equation*}
As a result, $0<e^{\mkern.8mu g_\tau(\lambda,\mu)}\leq\rho(|g(\lambda,\mu)|)$ and therefore,
\begin{equation*}
\begin{split}
e^{\mkern.8mu 2\,g_\tau(\lambda,\mu)} &\leq \rho(|g(\lambda,\mu)|)^2=\rho(|g(\lambda,\mu)|^2)\\
                                      &=\rho(g^*(\lambda,\mu)g(\lambda,\mu))=\|g^*(\lambda,\mu)g(\lambda,\mu)\|\\
                                      &=\|g(\lambda,\mu)\|^2
\end{split}
\end{equation*}
as claimed.
\end{proof}
The next argument is rather straightforward.
\begin{proof}[Proof of Lemma~\ref{lem:real-part}]
As $g(0,0)=T1=1$, by continuity, there is $r>0$ such that, for all $\lambda,\mu$ with $|\lambda|,|\mu|<r$, we have
$\|1-g^*(\lambda,\mu)g(\lambda,\mu)\|<1$. The series expansion of the logarithm thus yields
\begin{equation}\label{eq:log-expansion}
-\log(g^*(\lambda,\mu)g(\lambda,\mu))=\sum_{k=1}^\infty\frac 1k(1-g^*(\lambda,\mu)g(\lambda,\mu))^k.
\end{equation}
The definition of $g_\tau$ entails that
\begin{equation}
2\,\Rel h(\lambda,\mu)=2\,\tau(\log(|g(\lambda,\mu)|))=\tau(\log(|g(\lambda,\mu)|^2))=\tau\bigl(\log(g^*(\lambda,\mu)g(\lambda,\mu)\bigl).
\end{equation}
Combining these two identities gives the claim.
\end{proof}
The proof of the third lemma follows exactly Aupetit's arguments. (There appears to be some misprint at the bottom of page~61 and top of page~62
of~\cite{Aup98}.)
\begin{proof}[Proof of Lemma~\ref{lem:identical-zero}]
For all $\lambda,\mu\in\CC$, we have
\[
\bigl|e^{\mkern.8mu h(\lambda,\mu)}\bigr| = e^{\mkern.8mu\Rel h(\lambda,\mu)}=e^{\mkern.8mu g_\tau(\lambda,\mu)}\leq\|g(\lambda,\mu)\|
\]
by Lemma~\ref{lem:bounded}. Since
\[
\|g(\lambda,\mu)\|\leq\|T\|\,e^{\mkern.08mu|\lambda|(\|a\|+\|Ta\|)+|\mu|(\|b\|+\|Tb\|)}
\]
it follows that $e^{\mkern.8mu h(\lambda,\mu)}=e^{\alpha\lambda+\beta\mu+\gamma}$ for suitable $\alpha,\beta,\gamma\in\CC$
(\cite[Lemma~3.2]{Aup96}). As $g_\tau(0,0)=0$ we have $|e^\gamma|=1$, thus we may assume that $\gamma=0$ (since we need only the real part of~$\gamma$).
Therefore, $2\,\Rel h(\lambda,\mu)=\alpha\lambda+\beta\mu+\bar\alpha\bar\lambda+\bar\beta\bar\mu$.
From Lemma~\ref{lem:real-part} we obtain
\[
\alpha\lambda+\beta\mu+\bar\alpha\bar\lambda+\bar\beta\bar\mu=-\sum_{k=1}^\infty\frac 1k\tau\bigl((1-g^*(\lambda,\mu)g(\lambda,\mu))^k\bigr)
\]
for all $\lambda,\mu$ such that $|\lambda|,|\mu|<r$ for suitable $r>0$.
The series expansion of $g(\lambda,\mu)$ in \eqref{eq:series} does not contain any powers of $\lambda$ or $\mu$ of first order,
hence the series expansion in~\eqref{eq:log-expansion} cannot either. This entails that both $\alpha$ and $\beta$ are equal to zero.

It now follows from~\eqref{eq:trace-log-expansion} that $\tau\bigl(\log(g^*(\lambda,\mu)g(\lambda,\mu)\bigl)=0$.
\end{proof}
We now have all the tools to prove our main theorem by adapting the arguments in Theorem 1.12 of~\cite{Aup98} to our situation.
\begin{proof}[Proof of the Theorem]
Set $f(\lambda,\mu)=\sum_{k=1}^\infty\frac 1k\tau\bigl((1-g^*(\lambda,\mu)g(\lambda,\mu))^k\bigr)$
for all $\lambda,\mu$ such that $|\lambda|,|\mu|<r$ for suitable $r>0$ (given by Lemma~\ref{lem:real-part}).
By Lemma~\ref{lem:identical-zero}, $f=0$ and thus
\[
\frac{\partial^2}{\partial\lambda\partial\mu}f(0,0)=0=\frac{\partial^3}{\partial\lambda^2\partial\mu}f(0,0).
\]
Using these identities after substituting in the series expansion~\eqref{eq:series} into the log-series we find
\begin{equation}\label{eq:first}
\tau\big(T(ab)-TaTb\bigr)=0
\end{equation}
and
\begin{equation}\label{eq:second}
\tau\bigl(T(a^2b)+(Ta)^2Tb+Tb(Ta)^2-T(a^2)Tb-2\,T(ab)Ta\bigr)=0
\end{equation}
for all $a,b\in B$. From~\eqref{eq:first} we obtain
\[
\tau(T(a^2b))=\tau(T(a^2)Tb)
\]
and
\[
\tau(T(a^2b))=\tau(T(a(ab)))=\tau(TaT(ab))
\]
so that~\eqref{eq:second} reduces to
\[
\tau((Ta)^2Tb)=\tau(TaT(ab)),
\]
using the trace property.
It follows that
\[
\tau((Ta)^2Tb)=\tau(T(a^2)Tb).
\]
Since $T$ is surjective we may choose $b\in B$ such that
$Tb=((Ta)^2-T(a^2))^*$ wherefore the last identity yields, for each $a\in B$, that
\[
\tau\bigl(((Ta)^2-T(a^2))((Ta)^2-T(a^2))^*\bigr)=0.
\]
The faithfulness of $\tau$ implies that $T$ is a Jordan homomorphism.
\end{proof}

\section{Conclusions}\label{sect:consls}

\noindent
In this section, we collect together some consequences and sharpening of our main theorem.
We also relate it to open problems of a similar nature.

Suppose $T$ is a surjective linear mapping between two semisimple unital Banach algebras which preserves the spectrum of each element.
Then $T$ is injective (as explained in the Introduction) and $T1=1$. The latter follows, for example, from
\begin{equation*}
\sigma((T1-1)+Tx)=\sigma(T1+Tx)-1=\sigma(1+x)-1=\sigma(x)=\sigma(Tx)
\end{equation*}
and the surjectivity of $T$ which entails that $\sigma((T1-1)+y)=\sigma(y)$ for all $y$ in the codomain. Thus, by Zem\'anek's
characterisation of the radical, $T1-1=0$.

As a result, we have a symmetric situation and can apply the Theorem to either $T$ or its inverse to obtain the following consequence.
\begin{cor}\label{cor:spectrum-preserv}
Let $T$ be a surjective spectrum-preserving linear mapping between two semisimple unital Banach algebras. If either of them is a unital \C*
equipped with a faithful tracial state then $T$ is a Jordan isomorphism.
\end{cor}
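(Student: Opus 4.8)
The plan is to reduce everything to the main Theorem by exploiting the symmetry already observed. Write $T\colon B_1\to B_2$ for the given map and suppose, without loss of generality, that (at least) one of $B_1,B_2$ is a unital \C* carrying a faithful tracial state, hence finite in the sense of Section~\ref{sect:prelims}. As recorded just above, semisimplicity forces $T$ to be injective and $T1=1$; thus $T$ is a bijection, and $T^{-1}\colon B_2\to B_1$ is again surjective, unital, and spectrum-preserving, since $\sigma(T^{-1}y)=\sigma(TT^{-1}y)=\sigma(y)$ for every $y\in B_2$.

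Next I would treat the case in which the codomain $B_2$ is the finite \C*. A spectrum-preserving linear map in particular satisfies $\sigma(Ta)\subseteq\sigma(a)$ for all $a$, so it preserves invertible elements, and the main Theorem applies directly to give that $T$ is a Jordan homomorphism. If instead the domain $B_1$ is the finite \C*, I would run the same argument with $T^{-1}$ in place of $T$: its codomain $B_1$ is now the finite \C*, and $T^{-1}$ is surjective, unital and invertibility-preserving, so the Theorem shows that $T^{-1}$ is a Jordan homomorphism.

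Finally I would upgrade ``Jordan homomorphism'' to ``Jordan isomorphism''. This rests on the elementary fact that the inverse of a bijective linear Jordan homomorphism $S$ is again a Jordan homomorphism: writing $b=Sa$, one has $S^{-1}(b^2)=S^{-1}\bigl((Sa)^2\bigr)=S^{-1}\bigl(S(a^2)\bigr)=a^2=(S^{-1}b)^2$. Taking $S=T$ in the first case and $S=T^{-1}$ in the second, we conclude in either case that $T$ itself is a Jordan isomorphism. I do not expect any real obstacle here: the substance is entirely contained in the main Theorem, and the only points requiring a moment's care are that spectrum-preservation is genuinely stronger than invertibility-preservation, and that semisimplicity of \emph{both} algebras is what makes $T$ bijective, so that the argument may be carried out on whichever of the two algebras happens to carry the faithful trace.
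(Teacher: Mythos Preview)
Your proposal is correct and follows essentially the same route as the paper: use semisimplicity to make $T$ bijective and unital, then apply the main Theorem to whichever of $T$ or $T^{-1}$ has the finite \C* as its codomain. You have merely spelled out a couple of points (that spectrum-preservation implies invertibility-preservation, and that the inverse of a bijective Jordan homomorphism is again Jordan) which the paper leaves implicit.
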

This is another contribution to a longstanding, still open problem by Kaplansky who asked in 1970 whether the above statement holds
without any further assumptions on the Banach algebras. For further references, see~\cite{Aup98} and~\cite{Harris2001}.

All the steps in the proof of the Theorem but the very last one can be performed for each individual tracial state on a unital \C*.
Therefore, the assumption can be relaxed to the existence of a faithful family of tracial states, that is, a family $\{\tau_i\mid i\in I\}$
of tracial states $\tau_i$ such that $\tau_i(a^*a)=0$ for all $i\in I$ implies $a=0$.

In particular, since any tracial state on a simple unital \C* is faithful we obtain the following result.
\begin{cor}\label{cor:simple-case}
Let $T\colon B\to A$ be a surjective unital invertibility-preserving linear mapping into a simple unital \C*~$A$ which carries a tracial state.
Then $T$ is a Jordan homomorphism.
\end{cor}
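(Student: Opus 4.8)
The plan is to reduce Corollary~\ref{cor:simple-case} directly to the Theorem by showing that on a simple unital \C* every tracial state is automatically faithful. The hypothesis that $A$ merely ``carries a tracial state'' then already supplies a \emph{faithful} tracial state, so that $A$ is a finite unital \C* in the sense of Section~\ref{sect:prelims} and the Theorem applies without change.

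Concretely, I would fix a tracial state $\tau$ on $A$ and form its null space $N_\tau=\{a\in A\mid\tau(a^*a)=0\}$. Since $\tau$ is bounded and $a\mapsto\tau(a^*a)$ is continuous, $N_\tau$ is norm-closed. I would then check that $N_\tau$ is a two-sided ideal. For the left-ideal property: if $c\ge0$ then $\|c\|\,1-c\ge0$, hence $a^*(\|c\|\,1-c)a\ge0$ and so $\tau(a^*ca)\le\|c\|\,\tau(a^*a)$; applying this with $c=x^*x$ gives $\tau\bigl((xa)^*(xa)\bigr)\le\|x\|^2\,\tau(a^*a)$, so $a\in N_\tau$ forces $xa\in N_\tau$. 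For the right-ideal property one first observes, from the trace property, that $\tau(a^*a)=\tau(aa^*)$; then $(ax)(ax)^*=a(xx^*)a^*$ yields $\tau\bigl((ax)(ax)^*\bigr)\le\|x\|^2\,\tau(aa^*)=\|x\|^2\,\tau(a^*a)$, and the trace property once more gives $\tau\bigl((ax)^*(ax)\bigr)=\tau\bigl((ax)(ax)^*\bigr)$, which vanishes whenever $a\in N_\tau$. Thus $N_\tau$ is a closed two-sided ideal of $A$.

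To finish, note that $\tau(1)=1\neq0$, so $1\notin N_\tau$ and $N_\tau\neq A$; simplicity of $A$ then forces $N_\tau=\{0\}$, i.e.\ $\tau$ is faithful. Applying the Theorem to the surjective unital invertibility-preserving mapping $T\colon B\to A$ (now with the faithful tracial state $\tau$) shows that $T$ is a Jordan homomorphism. I do not expect any genuine obstacle here: the whole argument rests on the short, standard verification that the tracial null space is a two-sided ideal, and the only subtlety is that this uses \emph{both} directions of the trace identity — which is exactly the mechanism converting simplicity of $A$ into faithfulness of $\tau$.
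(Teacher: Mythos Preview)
Your proposal is correct and follows exactly the paper's own route: the paper simply states that ``any tracial state on a simple unital \C* is faithful'' and then invokes the Theorem, while you supply the standard details (that the tracial null space $N_\tau$ is a closed two-sided ideal, hence zero by simplicity). There is nothing to add.
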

\begin{rem}\label{rem:terminology}
Our terminology of a ``finite''\C* is not quite standard. In~\cite[III.1.3.1]{Black}, a unital \C* $A$ is called \textit{finite\/} if the identity of $A$ is a finite projection;
that is, there is no proper subprojection which is Murray--von Neumann equivalent to~$1$. Every unital \C* with a faithful tracial state is finite in this sense
but the converse fails in general (though it holds for stably finite exact \C*s). We prefer here a definition that does not make reference to any projections.
\end{rem}
We can also strengthen our main theorem in a different direction. Let $G_1(B)$ denote the \textit{principal component of\/} $G(B)$, where $B$ is a unital Banach
algebra. It is known, see, e.g., \cite[Theorem 3.3.7]{Aup91}, that $G_1(B)=\{e^{x_1}\cdots e^{x_n}\mid x_i\in B,\,n\in\NN\}$.
The associated \textit{exponential spectrum\/} of $x\in B$ is
\[
\sigmaeps(x)=\{\lambda\in\CC\mid\lambda-x\notin G_1(B)\}.
\]
In certain situations it is more natural and expedient to consider the exponential spectrum instead of the smaller spectrum, see, e.g.,~\cite[Theorem 3.3.8]{Aup91}.
From the proof of our main result we see that it suffices that the mapping $T$ sends the product of any two exponentials in $B$ onto an invertible element in~$A$.
This gives the following corollary.
\begin{cor}\label{cor:exp-spectrum}
Let $B$ be a unital complex Banach algebra and let $A$ be a unital finite \C*. Let $T\colon B\to A$ be a surjective unital linear mapping
such that $TG_1(B)\subseteq G(A)$. Then $T$ is a Jordan homomorphism.
\end{cor}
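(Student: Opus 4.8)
The plan is to observe that the entire machinery developed for the main theorem never actually used that $T$ preserves \emph{all} invertible elements; it only used that the particular entire function $g(\lambda,\mu)=T(e^{\lambda a}e^{\mu b})e^{-\lambda Ta}e^{-\mu Tb}$ takes values in $G(A)$. Since $e^{\lambda a}e^{\mu b}\in G_1(B)$ for every $\lambda,\mu\in\CC$ (it is a product of two exponentials), the hypothesis $TG_1(B)\subseteq G(A)$ already guarantees $T(e^{\lambda a}e^{\mu b})\in G(A)$, and hence $g(\lambda,\mu)\in G(A)$ as a product of invertibles in $A$. So the first step is simply to record that $g$ is well-defined as a $G(A)$-valued separately entire function under the weaker hypothesis.

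Next I would check that every ingredient feeding into the proof of the Theorem survives. Boundedness of $T$ can no longer be quoted from \cite[Theorem~5.5.2]{Aup91} verbatim (that result is about spectrum-preserving maps), but it is still available: the hypothesis $TG_1(B)\subseteq G(A)$ implies that $T$ maps the open set $G_1(B)$ into $G(A)$, and a standard argument (e.g.\ the one underlying \cite[Theorem~5.5.2]{Aup91}, or directly: if $x$ is in the open unit ball then $1-x\in G_1(B)$, so $1-Tx\in G(A)$, forcing $\rho(Tx)\le 1$ near the identity and hence $T$ bounded) gives continuity of $T$. With $T$ bounded and $T1=1$ (which is part of the hypothesis), the growth estimate $\|g(\lambda,\mu)\|\le\|T\|\,e^{|\lambda|(\|a\|+\|Ta\|)+|\mu|(\|b\|+\|Tb\|)}$ used in the proof of Lemma~\ref{lem:identical-zero} still holds. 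Lemmas~\ref{lem:aupetit-1.11}, \ref{lem:bounded}, \ref{lem:real-part} and~\ref{lem:identical-zero} are all stated purely in terms of a $G(A)$-valued entire function $g$ and the faithful tracial state $\tau$, so they apply unchanged. Consequently the computation in the proof of the Theorem — extracting $\frac{\partial^2}{\partial\lambda\partial\mu}f(0,0)=0$ and $\frac{\partial^3}{\partial\lambda^2\partial\mu}f(0,0)=0$ from $f\equiv0$, deducing \eqref{eq:first} and \eqref{eq:second}, and then using the trace property and faithfulness of $\tau$ together with surjectivity of $T$ to force $(Ta)^2=T(a^2)$ — goes through verbatim.

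I would therefore present the proof as: ``Fix $a,b\in B$. For every $\lambda,\mu\in\CC$ the element $e^{\lambda a}e^{\mu b}$ lies in $G_1(B)$, so by hypothesis $T(e^{\lambda a}e^{\mu b})\in G(A)$, and hence $g(\lambda,\mu)=T(e^{\lambda a}e^{\mu b})e^{-\lambda Ta}e^{-\mu Tb}\in G(A)$. The boundedness of $T$ follows as before. All of Lemmas~\ref{lem:aupetit-1.11}--\ref{lem:identical-zero} apply to this $g$, and the argument in the proof of the Theorem then yields $(Ta)^2=T(a^2)$ for every $a\in B$.'' The only point needing a sentence of care is the boundedness of $T$; once that is settled, nothing else changes. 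I expect that to be the sole (mild) obstacle: one must not simply cite the spectrum-preserving result but instead invoke the general principle that a surjective linear map carrying the principal component of the group of invertibles into the invertibles is automatically continuous, or give the short direct argument above.
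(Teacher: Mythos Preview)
Your proposal is correct and is exactly the paper's own justification: the sentence preceding the corollary simply notes that the proof of the Theorem only uses that $T(e^{\lambda a}e^{\mu b})\in G(A)$, which the hypothesis $TG_1(B)\subseteq G(A)$ already guarantees. One small repair to your boundedness sketch: from $1-Tx\in G(A)$ alone you only get $1\notin\sigma(Tx)$; instead observe that for $|\lambda|>\|x\|$ one has $\lambda-x=\lambda(1-\lambda^{-1}x)\in G_1(B)$, hence $\lambda-Tx\in G(A)$, giving $\rho(Tx)\le\|x\|$ for all $x$, after which the usual automatic-continuity argument for spectrally bounded surjections onto a semisimple codomain yields the boundedness of~$T$.
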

A \textit{spectral isometry\/} between two Banach algebras $A$ and $B$ is a linear mapping $S$ such that $\rho(Sx)=\rho(x)$ for all $x\in A$.
Clearly, every spectrum-preserving mapping is a spectral isometry and so is every Jordan isomorphism.
A conjecture related to Kaplansky's problem mentioned above states that every unital surjective spectral isometry between two \C*s is a Jordan isomorphism.
This conjecture has been confirmed in many cases, see, e.g., \cite{Ma09, Ma2013}, but is open in all generality.
Notably it was verified in \cite{Ma2020} if $A$ is a unital \C* of real rank zero and without tracial states.
The above Corollary~\ref{cor:spectrum-preserv} is thus a step forward in the direction of confirming the general conjecture.


\noindent
\textbf{Acknowledgements.} The research for this paper was completed while the second-named author was visiting Queen's University Belfast.
He would like to thank both Professor Martin Mathieu and the Mathematical Sciences Research Centre at Queen's University Belfast for their hospitality,
and the National Research Foundation of South Africa for their financial support (NRF Grant Number: 129692).

\smallskip

\end{document}